\numberwithin{equation}{section}
\newcommand{\R}{\mathbb{R}}
\newcommand{\N}{\mathbb{N}}
\newtheorem{lemma}{Lemma}[section]
\newtheorem{propn}[lemma]{Proposition}
\newtheorem{thm}[lemma]{Theorem}
\newtheorem{remark0}[lemma]{Remark}
\newtheorem{eg0}[lemma]{Example}
\author[K. Habermann]{Karen Habermann}
\address{University of Bonn, Hausdorff Center for Mathematics,
  Endenicher Allee 62, 53115 Bonn, Germany.}
\email{habermann@iam.uni-bonn.de}
\thanks{Research supported by the German Research Foundation DFG
  through the Hausdorff Center for Mathematics.}
\title[An explicit formula for the inverse of a factorial Hankel matrix]
{An explicit formula for the inverse of a factorial Hankel matrix}
\begin{document}
\begin{abstract}
  We consider the $n\times n$ Hankel matrix $H$ whose entries are defined
  by $H_{ij}=1/s_{i+j}$ where $s_k=(k-1)!$ and prove that $H$ is
  invertible for all $n\in\N$ by providing an explicit formula for its
  inverse matrix.
\end{abstract}

\maketitle
\thispagestyle{empty}

\section{Introduction}
Fix $n\in\N$ and let $H$ be the $n\times n$ matrix given by, for
$i,j\in\{1,\dots,n\}$,
\begin{equation*}
  H_{ij}=\frac{1}{(i+j-1)!}\;.
\end{equation*}
This defines a Hankel matrix because the entry $H_{ij}$ depends
only on the sum $i+j$. The factorial Hankel matrix $H$ is used as a
test matrix in numerical analysis and features as
{\tt gallery(`ipjfact')} in the Matrix Computation
Toolbox~\cite{higham_tool} by Nicholas Higham, also
see~\cite{higham_guide} and \cite{higham_acc}. Our interest in
studying the matrix $H$ is due to it arising in determining the
covariance structure of an iterated Kolmogorov diffusion, that is,
a Brownian motion together with a finite number of its iterated time
integrals, cf.~\cite[Section~4.4]{model_class}. To find an
explicit expression for a diffusion bridge associated with an iterated
Kolmogorov diffusion, we need to invert its covariance matrix, which
particularly requires us to invert the matrix $H$. It is therefore of
interest, both from our point of view and for using $H$ as a test
matrix, to show that the matrix
$H$ is invertible and to obtain an explicit formula for its inverse.
We use general binomial coefficients, which are
discussed in more detail in Section~\ref{comid}.
\begin{thm}\label{thm:inverse}
  For all $n\in\N$, the inverse $M$ of the
  Hankel matrix $H$ exists and it is given by
  \begin{equation*}
    M_{ij}= (-1)^{n+i+j+1}(i-1)!\,j!\,\binom{n-1}{i-1}\binom{n+j-1}{j}
    \sum_{k=0}^{i-1}\binom{n-i+k}{j-1}\binom{n+k-1}{k}\;.
  \end{equation*}
\end{thm}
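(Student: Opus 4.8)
The plan is to verify the proposed formula directly, by checking that $MH=I$; equivalently, that for all $i,l\in\{1,\dots,n\}$ one has
\[
  \sum_{j=1}^n M_{ij}\,H_{jl}=\sum_{j=1}^n\frac{M_{ij}}{(j+l-1)!}=\delta_{il}.
\]
Inserting the proposed expression for $M_{ij}$ produces a double sum, over $j\in\{1,\dots,n\}$ and over the index $k\in\{0,\dots,i-1\}$ appearing in the definition of $M_{ij}$. The first step is to pull out of the $j$-summation the quantity $(-1)^{n+i+1}(i-1)!\binom{n-1}{i-1}$, which does not depend on $j$, and then to interchange the two finite sums so that the inner summation runs over $j$.

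The heart of the argument is the evaluation of this inner $j$-sum in closed form. The key observation is the identity
\[
  \frac{j!}{(j+l-1)!}\binom{n+j-1}{j}=\frac{(n-l)!}{(n-1)!}\binom{n+j-1}{n-l},
\]
which replaces the awkward ratio of factorials by a binomial coefficient polynomial in $j$. After the shift $j\mapsto j-1$ (which contributes only an overall sign), the inner sum takes the shape $\sum_{j=0}^{m}(-1)^{j}\binom{m}{j}\binom{a+j}{b}$ with $m=n-i+k$, $a=n$ and $b=n-l$, and the general binomial identity $\sum_{j=0}^{m}(-1)^{j}\binom{m}{j}\binom{a+j}{b}=(-1)^{m}\binom{a}{b-m}$ — itself a consequence of Vandermonde's convolution together with the alternating-sum-of-binomials identity — collapses it to the single general binomial coefficient $(-1)^{n-i+k}\binom{n}{i-l-k}$.

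It then remains to collect the powers of $-1$ and to simplify the surviving factorials, using in particular $(i-1)!\binom{n-1}{i-1}=(n-1)!/(n-i)!$. This reduces the whole claim to
\[
  \frac{(n-l)!}{(n-i)!}\sum_{k=0}^{i-1}(-1)^{k}\binom{n+k-1}{k}\binom{n}{i-l-k}=\delta_{il}.
\]
Here the upper-negation rule $(-1)^{k}\binom{n+k-1}{k}=\binom{-n}{k}$ turns the $k$-sum into a Vandermonde convolution with value $\binom{-n+n}{i-l}=\binom{0}{i-l}$, which equals $1$ when $i=l$ and $0$ when $i\neq l$; and in the case $i=l$ the prefactor $(n-l)!/(n-i)!$ is $1$, which finishes the verification.

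I expect the main difficulty to lie not in any single identity but in the careful bookkeeping of the summation ranges and of the conventions for general binomial coefficients with negative or out-of-range arguments, so that the finite-difference identity and Vandermonde's identity apply without boundary corrections; once those conventions are fixed — and they are exactly what Section~\ref{comid} is set up to provide — the remaining manipulations are routine. A slicker but more delicate alternative would be to read off an $LU$- or Cholesky-type factorisation of $H$ from the combinatorial structure of $M$, but since the explicit formula for the inverse is already conjectured, direct verification seems the most economical route.
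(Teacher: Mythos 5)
Your verification is correct and is essentially the paper's own proof: the closed form you obtain for the inner $j$-sum is exactly Lemma~\ref{lem1} (the paper derives it via the reflection identity and Chu--Vandermonde rather than your finite-difference identity $\sum_{j=0}^{m}(-1)^{j}\binom{m}{j}\binom{a+j}{b}=(-1)^{m}\binom{a}{b-m}$, but the two manipulations are equivalent), and your treatment of the $k$-sum via the upper negation $(-1)^{k}\binom{n+k-1}{k}=\binom{-n}{k}$ followed by Vandermonde is precisely Lemma~\ref{lem2}. The only point to watch is that your formulation produces binomial coefficients with possibly negative lower index, such as $\binom{n}{i-l-k}$ when $l>i$, which needs the extra convention that these vanish (the paper instead keeps the lower index non-negative by writing $\binom{n}{n+l-i+k}$ and treats the case $l>i$ separately); this is exactly the bookkeeping you flag and it causes no difficulty.
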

In particular, it immediately follows that all the entries of the inverse matrix
$M$ are integer-valued.
Unpublished work by Gover~\cite{gover} already
contains an explicit formula for the inverse of the factorial Hankel
matrix $H$. However, our formula differs from the formula derived by
Gover, and we employ a different proof technique. While Gover first
determines expressions for the first row and last column of the
inverse of $H$ to then use a recursive procedure by
Trench~\cite{trench} to compute the remaining entries of the inverse
matrix, we prove Theorem~\ref{thm:inverse} directly
by manipulating general binomial coefficients, and in particular without
relying on any recursive procedures.
For completeness, we add that the explicit
formula~\cite[(3.17)]{gover} leads to
\begin{equation*}
  M_{ij}=n(-1)^{n-i-j-1}\sum_{k=\max(0,i+j-1-n)}^{i-1}
    \frac{(n+i+j-k-2)!\,(n+k-1)!\,(i+j-2k-1)}
         {(i+j-k-1)!\,k!\,(n+k-i-j+1)!\,(n-k)!}\;,
\end{equation*}
which, for $m=i+j-1$ and with binomial coefficients, Gover
rewrites as
\begin{align*}
  M_{ij}=(-1)^{n-m}n(m-1)!\sum_{k=\max(0,m-n)}^{i-1}
    \binom{n+m-k-1}{n-k}\binom{n+k-1}{n+k-m}
    \left(\binom{m-1}{k}-\binom{m-1}{k-1}\right)\;.
\end{align*}
We review two combinatorial identities
in Section~\ref{comid} which we frequently use in our
manipulation of general binomial coefficients, before
we give the proof of Theorem~\ref{thm:inverse} in
Section~\ref{proof}. Throughout, we use
the convention that $\N$ denotes the positive
integers and $\N_0$ the non-negative integers.

\proof[Acknowledgement]
I would like to thank Nicholas Higham who gave me access to a version
of the report by Michael Gover, since, as far as I am aware, this work
is not publicly available, and it allowed me to compare my formula to
the results by Gover. I would also like to thank Martin Lenz who first
pointed me towards that reference, and I am grateful to John Burkardt, Geoff
Tupholme, Hannah Myers, Jennifer Rowland, Alison Cullingford, and
Anthony Byrne, and all their kind help in my quest of tracking
down the report by Michael Gover.

\section{Combinatorial identities}
\label{comid}
We use the notion of a general binomial coefficient which, for
$t\in\R$ and $m\in\N_0$, is defined as
\begin{equation*}
  \binom{t}{m}=\prod_{i=1}^m\frac{t+1-i}{i}
  =\frac{t(t-1)\cdots(t-m+1)}{m!}\;,
\end{equation*}
where it is understood that
\begin{equation*}
  \binom{t}{0}=1\;.
\end{equation*}
Note that if $t\in\N_0$ and $t<m$ then
\begin{equation*}
  \binom{t}{m}
  =\left(\prod_{i=1}^t\frac{t+1-i}{i}\right)\frac{t+1-(t+1)}{t+1}
   \left(\prod_{i=t+2}^m\frac{t+1-i}{i}\right) =0\;.
\end{equation*}
The first identity we frequently use in the proof of
Theorem~\ref{thm:inverse} is the reflection identity for general
binomial coefficients.
\begin{propn}
  For all $t\in\R$ and $m\in\N_0$, we have
  \begin{equation*}
    \binom{t}{m}=(-1)^m\binom{m-t-1}{m}\;.
  \end{equation*}
\end{propn}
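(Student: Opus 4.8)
The plan is to prove the reflection identity directly from the product definition of the general binomial coefficient, with no recourse to induction or Pascal-type recursions. First I would dispose of the trivial case $m=0$, where both sides equal $1$ by the stated convention. For $m\geq 1$, I would write both $\binom{t}{m}$ and $\binom{m-t-1}{m}$ in the falling-product form $\frac{t(t-1)\cdots(t-m+1)}{m!}$ and the analogous expression for $m-t-1$; since the two denominators are both $m!$, it then suffices to compare the two numerators, each a product of $m$ linear factors in $t$.

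The heart of the argument is a single sign-extraction step. The numerator of $\binom{m-t-1}{m}$ is
\begin{equation*}
  (m-t-1)(m-t-2)\cdots(m-t-m) = \prod_{j=0}^{m-1}\bigl(m-t-1-j\bigr),
\end{equation*}
and I would pull a factor $-1$ out of each of these $m$ factors, using $m-t-1-j=-(t+1-m+j)$, to obtain $(-1)^m\prod_{j=0}^{m-1}(t+1-m+j)$. Reindexing by $i=m-j$ turns this remaining product into $\prod_{i=1}^m(t+1-i)=t(t-1)\cdots(t-m+1)$, which is exactly the numerator of $\binom{t}{m}$. Dividing by $m!$ yields $\binom{m-t-1}{m}=(-1)^m\binom{t}{m}$, and multiplying through by $(-1)^m$ gives the stated identity.

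I do not expect a genuine obstacle: the only point requiring a little care is matching the two lists of $m$ factors after the sign extraction and reindexing, and confirming that the accumulated sign is exactly $(-1)^m$. The argument uses nothing about $t$ beyond $t\in\R$, since the definition of $\binom{t}{m}$ makes no integrality assumption, so it applies verbatim in the generality claimed.
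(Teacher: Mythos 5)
Your argument is correct and is essentially the same as the paper's proof: both work directly from the product definition, extracting a factor of $-1$ from each of the $m$ linear factors and reindexing the product to recover the numerator of $\binom{t}{m}$. The only cosmetic difference is that you separate numerator from the common denominator $m!$ (and treat $m=0$ explicitly), while the paper reindexes the full product $\prod_{i=1}^m\frac{m-t-i}{i}$ in one pass.
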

\begin{proof}
  By using the definition of a general binomial coefficient, we deduce
  \begin{align*}
    (-1)^m\binom{m-t-1}{m}
    &=(-1)^m\prod_{i=1}^m\frac{m-t-i}{i}
      =(-1)^m\prod_{j=1}^m\frac{m-t-m-1+j}{m+1-j}\\
    &=(-1)^m\prod_{j=1}^m\frac{-t-1+j}{j}
      =\prod_{j=1}^m\frac{t+1-j}{j}=\binom{t}{m}\;,
  \end{align*}
  as claimed.
\end{proof}
Secondly, we make use of the Chu-Vandermonde identity,
e.g. see~\cite[Chapter~3]{koepf}. For completeness, its statement and
a proof are given below.
\begin{propn}
  For all $s,t\in\R$ and $m\in\N_0$, we have
  \begin{equation*}
    \binom{s+t}{m}=\sum_{k=0}^m\binom{s}{k}\binom{t}{m-k}\;.
  \end{equation*}
\end{propn}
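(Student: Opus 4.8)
The plan is to prove the Chu--Vandermonde identity by the polynomial identity principle, reducing the general real case to the case of non-negative integer arguments, where a transparent counting argument applies. Fix $m\in\N_0$. By the definition of a general binomial coefficient, $\binom{s+t}{m}=\frac{1}{m!}(s+t)(s+t-1)\cdots(s+t-m+1)$ is a polynomial in the two real variables $s$ and $t$, of total degree $m$; likewise, for each $k\in\{0,\dots,m\}$ the product $\binom{s}{k}\binom{t}{m-k}$ is a polynomial in $s$ and $t$, being a polynomial of degree $k$ in $s$ times one of degree $m-k$ in $t$. Hence the difference
\[
  P(s,t)=\binom{s+t}{m}-\sum_{k=0}^m\binom{s}{k}\binom{t}{m-k}
\]
is a polynomial in $(s,t)$, and it suffices to show that $P$ vanishes on $\N_0\times\N_0$: fixing $t\in\N_0$, the map $s\mapsto P(s,t)$ is a univariate polynomial with infinitely many zeros, hence identically zero, so $P(s,t)=0$ for all $s\in\R$ and all $t\in\N_0$; fixing then $s\in\R$, the map $t\mapsto P(s,t)$ is again a univariate polynomial vanishing at infinitely many points, hence $P\equiv 0$ on $\R^2$.

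It then remains to establish the identity when $s,t\in\N_0$, and here I would argue combinatorially. Let $X$ be a set with $|X|=s+t$, partitioned as $X=A\sqcup B$ with $|A|=s$ and $|B|=t$. The left-hand side $\binom{s+t}{m}$ counts the $m$-element subsets of $X$. Any such subset $S$ is determined by the pair $(S\cap A,S\cap B)$, and if $|S\cap A|=k$ then $|S\cap B|=m-k$ with $k$ ranging over $\{0,\dots,m\}$; conversely every choice of a $k$-element subset of $A$ together with an $(m-k)$-element subset of $B$ yields a distinct $m$-element subset of $X$. Summing over $k$ gives $\binom{s+t}{m}=\sum_{k=0}^m\binom{s}{k}\binom{t}{m-k}$, where the terms with $k>s$ or $m-k>t$ simply contribute zero, consistent with the vanishing of $\binom{u}{v}$ for $u\in\N_0$ with $u<v$ recorded above.

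There is no genuine obstacle here; the only point deserving care is the reduction from real to integer arguments, that is, the observation that a two-variable polynomial vanishing on $\N_0\times\N_0$ vanishes identically, which is precisely what lets the elementary counting identity settle the general statement. As a purely algebraic alternative to the combinatorial step, one could instead fix $s\in\R$ and induct on $t\in\N_0$, using Pascal's rule $\binom{t}{m}=\binom{t-1}{m-1}+\binom{t-1}{m}$ for general binomial coefficients (which follows from a short manipulation of products in the same spirit as the proof of the reflection identity) to reduce the sum for $(s,t)$ to the sums for $(s,t-1)$; but I expect the counting argument above to be the cleaner route to include for completeness.
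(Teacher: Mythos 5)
Your proof is correct, but it follows a genuinely different route from the paper. The paper argues analytically: it invokes the general binomial theorem to expand $(1+x)^{s+t}$, $(1+x)^s$ and $(1+x)^t$ as power series for $|x|<1$, forms the Cauchy product of the latter two, and compares coefficients in $(1+x)^{s+t}=(1+x)^s(1+x)^t$. You instead use the polynomial identity principle: for fixed $m$, both sides are polynomials in $(s,t)$, the identity for $s,t\in\N_0$ follows from the standard count of $m$-element subsets of a set split into blocks of sizes $s$ and $t$ (with the boundary terms vanishing by the convention $\binom{u}{v}=0$ for $u\in\N_0$, $u<v$, and the case $s+t<m$ handled automatically), and the two-step extension --- first in $s$ for fixed $t\in\N_0$, then in $t$ for fixed $s\in\R$ --- is exactly the right way to pass from $\N_0\times\N_0$ to $\R^2$. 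What your approach buys is that it is purely algebraic and combinatorial: no convergence of binomial series, no product of infinite series, and no appeal to uniqueness of power series coefficients (which the paper uses implicitly when it concludes from equality for all $|x|<1$). What the paper's approach buys is brevity once the general binomial theorem is taken as known, and it avoids the (mild) bookkeeping of your reduction step. Either proof would serve the paper's purpose, since the identity is only needed here for real arguments of the special forms appearing in Lemmas 3.1 and 3.2.
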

\begin{proof}
  By the general binomial theorem, we know that, for all $r\in\R$ and
  all $x\in\R$ with $|x|<1$,
  \begin{equation*}
    (1+x)^r=\sum_{m=0}^\infty\binom{r}{m}x^m\;.
  \end{equation*}
  Applying the general binomial theorem three times, we obtain that,
  for all $s,t\in\R$ and all $x\in\R$ with $|x|<1$,
  \begin{equation*}
    (1+x)^{s+t}=\sum_{m=0}^\infty\binom{s+t}{m}x^m
  \end{equation*}
  as well as 
  \begin{equation*}
    (1+x)^s(1+x)^t
    =\sum_{k=0}^\infty\binom{s}{k}x^k \sum_{l=0}^\infty\binom{t}{l}x^l\;.
  \end{equation*}
  By a discrete convolution of the two series
  \begin{equation*}
    \sum_{k=0}^\infty\binom{s}{k}x^k
    \quad\mbox{and}\quad
    \sum_{l=0}^\infty\binom{t}{l}x^l\;,
  \end{equation*}
  we deduce that
  \begin{equation*}
    \sum_{k=0}^\infty\binom{s}{k}x^k \sum_{l=0}^\infty\binom{t}{l}x^l
    =\sum_{m=0}^\infty\sum_{k=0}^m\binom{s}{k}x^k\binom{t}{m-k}x^{m-k}
    =\sum_{m=0}^\infty\sum_{k=0}^m\binom{s}{k}\binom{t}{m-k}x^m\;.
  \end{equation*}
  Due to $(1+x)^{s+t}=(1+x)^s(1+x)^t$, we established that
  \begin{equation*}
    \sum_{m=0}^\infty\binom{s+t}{m}x^m
    =\sum_{m=0}^\infty\sum_{k=0}^m\binom{s}{k}\binom{t}{m-k}x^m\;.
  \end{equation*}
  Since the latter holds for all $x\in\R$ with $|x|<1$, the desired
  identities follow.
\end{proof}

\section{Inverse of a factorial Hankel matrix}
\label{proof}
To simplify the presentation of the proof of
Theorem~\ref{thm:inverse}, we split up the analysis into two parts.
\begin{lemma}\label{lem1}
  For all $n\in\N$, we have, for $i,l\in\N$ and $k\in\N_0$ with
  $1\leq i,l\leq n$ and $0\leq k\leq i-1$,
  \begin{equation*}
    \sum_{j=1}^n(-1)^{j}\frac{j!}{(l+j-1)!}\binom{n+j-1}{j}
    \binom{n-i+k}{j-1}=
    (-1)^{n+i+k+1}\frac{(n-l)!}{(n-1)!}\binom{n}{n+l-i+k}\;.
  \end{equation*}
\end{lemma}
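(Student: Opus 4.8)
The plan is to strip away all the factorials first, reducing the left-hand side to a single binomial convolution that the Chu--Vandermonde identity can evaluate. Since $1\le l\le n$, one has
\[
  \frac{j!}{(l+j-1)!}\binom{n+j-1}{j}=\frac{(n+j-1)!}{(n-1)!\,(l+j-1)!}=\frac{(n-l)!}{(n-1)!}\binom{n+j-1}{n-l}\;,
\]
so the constant $(n-l)!/(n-1)!$ factors out and $j$ survives only in the top entry of a binomial coefficient. Substituting $j=m+1$, and noting that $0\le k\le i-1$ forces $n-i+k\in\N_0$ with $0\le n-i+k\le n-1$ (so that $\binom{n-i+k}{m}$ vanishes for $m>n-i+k$), I would rewrite the left-hand side as
\[
  -\frac{(n-l)!}{(n-1)!}\sum_{m=0}^{n-i+k}(-1)^m\binom{n+m}{n-l}\binom{n-i+k}{m}\;.
\]

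Next I would coax the remaining sum into Chu--Vandermonde form. Writing $\binom{n+m}{n-l}=\binom{n+m}{m+l}$ and applying the reflection identity gives $\binom{n+m}{m+l}=(-1)^{m+l}\binom{l-n-1}{m+l}$, hence $(-1)^m\binom{n+m}{n-l}=(-1)^l\binom{l-n-1}{m+l}$, which moves the $m$-dependence into the \emph{lower} entry. To make the two lower entries add to a constant, I would use the symmetry $\binom{n-i+k}{m}=\binom{n-i+k}{(n-i+k)-m}$ (immediate from the factorial formula, valid since $n-i+k\in\N_0$) and reindex by $\mu=m+l$; the terms with $\mu<l$ drop out because the second coefficient then has lower entry exceeding $n-i+k$, so the sum becomes
\[
  \sum_{\mu=0}^{n+l-i+k}\binom{l-n-1}{\mu}\binom{n-i+k}{(n+l-i+k)-\mu}=\binom{l-i+k-1}{\,n+l-i+k\,}
\]
by the Chu--Vandermonde identity with $N=n+l-i+k\in\N_0$, $s=l-n-1$, $t=n-i+k$.

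Finally I would apply the reflection identity one more time, $\binom{l-i+k-1}{n+l-i+k}=(-1)^{n+l-i+k}\binom{n}{n+l-i+k}$, and collect the accumulated factors $-(-1)^l$ and $(-1)^{n+l-i+k}$ together with $(n-l)!/(n-1)!$; since $-(-1)^l(-1)^{n+l-i+k}=(-1)^{n+i+k+1}$ (sign exponents modulo $2$, using $-i\equiv i$), this is exactly the claimed right-hand side. The only real care required is bookkeeping rather than ideas: checking that each summation range that is enlarged or shrunk only gains or loses zero terms, choosing the correct coefficient to reflect versus to symmetrise (reflecting $\binom{l-n-1}{m+l}$ is useless, as $l-n-1\notin\N_0$), and tracking the parity of the sign exponents. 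I do not anticipate a genuine obstacle beyond this; the structural content is the single Chu--Vandermonde convolution above.
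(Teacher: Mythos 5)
Your proof is correct and follows essentially the same route as the paper's: you absorb the factorials into $\frac{(n-l)!}{(n-1)!}$ times a binomial coefficient, reflect to put the summation index into the lower entry, use symmetry and a reindexing (adding or dropping only vanishing terms) to reach the Chu--Vandermonde convolution $\binom{l-i+k-1}{n+l-i+k}$, and reflect once more to obtain $(-1)^{n+i+k+1}\binom{n}{n+l-i+k}$ with the correct sign bookkeeping. The only differences are cosmetic, such as writing $\binom{n+j-1}{n-l}$ in place of the paper's $\binom{n+j-1}{l+j-1}$ and shifting $j=m+1$ earlier in the argument.
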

\begin{proof}
  We observe that, for $j\in\{1,\dots,n\}$,
  \begin{equation*}
    \frac{j!}{(l+j-1)!}\binom{n+j-1}{j}=
    \frac{j!}{(l+j-1)!}\frac{(n+j-1)!}{j!\,(n-1)!}=
    \frac{(n-l)!}{(n-1)!}\binom{n+j-1}{l+j-1}\;.
  \end{equation*}
  Moreover, the reflection identity for general binomial coefficients
  yields
  \begin{equation*}
    \binom{n+j-1}{l+j-1}=(-1)^{l+j-1}\binom{l-n-1}{l+j-1}\;.
  \end{equation*}
  Using both relations, we obtain
  \begin{align*}
    \sum_{j=1}^n(-1)^{j}\frac{j!}{(l+j-1)!}\binom{n+j-1}{j}
    \binom{n-i+k}{j-1}
    &=
    \sum_{j=1}^n(-1)^{j}\frac{(n-l)!}{(n-1)!}\binom{n+j-1}{l+j-1}
    \binom{n-i+k}{j-1}\\
    &=\sum_{j=1}^n(-1)^{l-1}\frac{(n-l)!}{(n-1)!}\binom{l-n-1}{l+j-1}
    \binom{n-i+k}{j-1}\;.
  \end{align*}
  If $j>n-i+k+1$, that is, if $n-i+k<j-1$, we have
  \begin{equation*}
    \binom{n-i+k}{j-1}=0
  \end{equation*}
  since $i\leq n$ guarantees that $n-i+k\geq 0$. From $k\leq i-1$, it
  also follows that $n-i+k+1\leq n$.
  The symmetry rule for binomial coefficients and reindexing the sum
  then give
  \begin{align*}
    \sum_{j=1}^n\binom{l-n-1}{l+j-1}\binom{n-i+k}{j-1}
    &=
    \sum_{j=1}^{n-i+k+1}\binom{l-n-1}{l+j-1}\binom{n-i+k}{n-i+k-j+1}\\
    &=
    \sum_{a=l}^{n+l-i+k}\binom{l-n-1}{a}\binom{n-i+k}{n+l-i+k-a}\;.
  \end{align*}
  By noting that for $a\in\N_0$ with $a< l$, we have
  $n-i+k<n+l-i+k-a$ and therefore,
  \begin{equation*}
    \binom{n-i+k}{n+l-i+k-a}=0\;,
  \end{equation*}
  and by applying the Chu-Vandermonde identity, we deduce that
  \begin{align*}
    \sum_{a=l}^{n+l-i+k}\binom{l-n-1}{a}\binom{n-i+k}{n+l-i+k-a}
    &=
    \sum_{a=0}^{n+l-i+k}\binom{l-n-1}{a}\binom{n-i+k}{n+l-i+k-a}\\
    &=\binom{l-i+k-1}{n+l-i+k}\;.
  \end{align*}
  Putting our conclusions together, and using the reflection identity
  for general binomial coefficients a second time, we obtain
  \begin{align*}
    \sum_{j=1}^n(-1)^{j}\frac{j!}{(l+j-1)!}\binom{n+j-1}{j}
    \binom{n-i+k}{j-1}
    &=(-1)^{l-1}\frac{(n-l)!}{(n-1)!}\binom{l-i+k-1}{n+l-i+k}\\
    &=(-1)^{n+i+k+1}\frac{(n-l)!}{(n-1)!}\binom{n}{n+l-i+k}\;,
  \end{align*}
  as claimed.
\end{proof}
Let $\delta_{il}$ denote the Kronecker delta for $i,l\in\N$.
\begin{lemma}\label{lem2}
  For all $n\in\N$ and all $i,l\in\N$ with $1\leq i,l\leq n$, we have
  \begin{equation*}
    \sum_{k=0}^{i-1}(-1)^k\binom{n}{n+l-i+k}\binom{n+k-1}{k}
    =\delta_{il}\;.
  \end{equation*}
\end{lemma}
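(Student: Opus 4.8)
The plan is to evaluate the sum
\[
S_{il}=\sum_{k=0}^{i-1}(-1)^k\binom{n}{n+l-i+k}\binom{n+k-1}{k}
\]
by first rewriting both binomial coefficients via the reflection identity so that the summation variable $k$ appears in the upper arguments in a way amenable to the Chu--Vandermonde identity. Concretely, $\binom{n+k-1}{k}=(-1)^k\binom{-n}{k}$, which absorbs the sign $(-1)^k$ and turns the summand into $\binom{n}{n+l-i+k}\binom{-n}{k}$. For the other factor, since the lower index $n+l-i+k$ exceeds $n$ precisely when $k>i-l$, the terms with $k>i-l$ vanish automatically when $i\ge l$; when $i<l$ one checks the lower index $n+l-i+k\ge l>0$ stays in range but one should double-check the boundary. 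In either case the effective range of $k$ can be extended to $0\le k\le$ (something convenient) without changing the sum, which is the key bookkeeping step.

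Next I would apply the symmetry rule $\binom{n}{n+l-i+k}=\binom{n}{i-l-k}$ (valid since the entries are in range after the vanishing observation), so the sum becomes
\[
S_{il}=\sum_{k}\binom{-n}{k}\binom{n}{i-l-k},
\]
which is exactly the Chu--Vandermonde convolution with parameters $s=-n$, $t=n$, and $m=i-l$, giving $S_{il}=\binom{0}{i-l}$. Since $i-l$ is an integer with $-(n-1)\le i-l\le n-1$, the value $\binom{0}{i-l}$ equals $1$ when $i=l$ and $0$ otherwise (using that $\binom{0}{m}=0$ for $m\in\N$ and, via reflection, $\binom{0}{m}=(-1)^m\binom{m-1}{m}=0$ for the negative case too), which is precisely $\delta_{il}$. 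This completes the argument.

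The main obstacle I anticipate is the careful handling of the index ranges: one must verify that extending the summation over $k$ to the full range needed for a clean application of Chu--Vandermonde introduces only zero terms, and one must separately treat the cases $i\ge l$ and $i<l$ when checking that $\binom{n}{n+l-i+k}$ vanishes or stays in a valid range. In particular, when $i<l$ the term $\binom{n}{n+l-i+k}$ never vanishes for $0\le k\le i-1$ (since $n<n+l-i+k$ fails, as $l-i+k$ could be anywhere from $l-i$ up to $l-1<n$), so one relies instead on $\binom{0}{i-l}=0$ from the negative lower index at the very end; ensuring the symmetry rule is applied to valid arguments before invoking Vandermonde is the delicate point. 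The rest is routine manipulation with the two identities established in Section~\ref{comid}.
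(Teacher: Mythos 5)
Your treatment of the main case $i\ge l$ is essentially the paper's own argument: reflect $(-1)^k\binom{n+k-1}{k}=\binom{-n}{k}$, discard the terms with $k>i-l$ because $\binom{n}{n+l-i+k}=0$ there, then apply the symmetry rule and Chu--Vandermonde with $m=i-l\ge 0$ to obtain $\binom{0}{i-l}=\delta_{il}$. That part is correct.

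The case $i<l$, however, is handled incorrectly as written. You claim that for $i<l$ the factor $\binom{n}{n+l-i+k}$ \emph{never} vanishes on $0\le k\le i-1$, on the grounds that $l-i+k\le l-1<n$; but the lower index is $n+l-i+k$, not $l-i+k$, and since $l-i\ge 1$ one has $n+l-i+k\ge n+1>n$ for every $k\ge 0$. Hence every term vanishes and the sum is $0=\delta_{il}$ outright --- exactly the observation you yourself state one sentence earlier (``the lower index exceeds $n$ precisely when $k>i-l$'', and for $i<l$ every $k\ge 0$ satisfies $k>i-l$), and exactly how the paper disposes of $l>i$. Your proposed fallback, pushing the computation through to $\binom{0}{i-l}$ with $i-l<0$, does not go through in the paper's framework: the general binomial coefficient $\binom{t}{m}$, the reflection identity, and the Chu--Vandermonde identity are defined and proved only for $m\in\N_0$, so the symmetry step $\binom{n}{n+l-i+k}=\binom{n}{i-l-k}$ (negative lower index), Vandermonde with $m=i-l<0$, and the claimed evaluation $\binom{0}{m}=(-1)^m\binom{m-1}{m}=0$ ``for the negative case'' are all meaningless as stated. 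The repair is immediate --- when $i<l$ conclude directly that every summand is zero --- but the case as you wrote it rests on a false claim and on identities applied outside their domain of validity.
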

\begin{proof}
  For $k\in\N_0$, if $k>i-l$ then $n<n+l-i+k$ and therefore,
  \begin{equation*}
    \binom{n}{n+l-i+k}=0\;.
  \end{equation*}
  In particular, if $l>i$, that is, if $0>i-l$,
  we immediately obtain
  \begin{equation*}
    \sum_{k=0}^{i-1}(-1)^k\binom{n}{n+l-i+k}\binom{n+k-1}{k}=0\;.
  \end{equation*}
  Let us now suppose that $l\leq i$.
  By the reflection identity for general binomial coefficients, we know
  \begin{equation*}
    (-1)^k\binom{n+k-1}{k}=\binom{-n}{k}\;,
  \end{equation*}
  and, by reindexing the sum, it follows that
  \begin{align*}
    \sum_{k=0}^{i-1}(-1)^k\binom{n}{n+l-i+k}\binom{n+k-1}{k}
    &=\sum_{k=0}^{i-l}\binom{n}{n+l-i+k}\binom{-n}{k}\\
    &=\sum_{b=0}^{i-l}\binom{n}{n-b}\binom{-n}{i-l-b}\;.
  \end{align*}
  Using the symmetry rule for binomial coefficients and the
  Chu-Vandermonde identity, we deduce
  \begin{equation*}
    \sum_{b=0}^{i-l}\binom{n}{n-b}\binom{-n}{i-l-b}
    =\sum_{b=0}^{i-l}\binom{n}{b}\binom{-n}{i-l-b}
    =\binom{0}{i-l}=\delta_{il}\;.
  \end{equation*}
  Thus, we established the desired identity both for $l>i$ and for
  $l\leq i$.
\end{proof}

Combining both results gives the proof of Theorem~\ref{thm:inverse}.
\begin{proof}[Proof of Theorem~\ref{thm:inverse}]
  By first applying Lemma~\ref{lem1} and then Lemma~\ref{lem2}, we
  conclude that, for all $n\in\N$ and all $i,l\in\{1,\dots,n\}$,
  \begin{align*}
    \left(M H\right)_{il}
    &=\sum_{j=1}^nM_{ij}H_{jl}\\
    &=\sum_{j=1}^n(-1)^{n+i+j+1}\frac{(i-1)!\,j!}{(l+j-1)!}
      \binom{n-1}{i-1}\binom{n+j-1}{j}
      \sum_{k=0}^{i-1}\binom{n-i+k}{j-1}\binom{n+k-1}{k}\\
    &=(-1)^{n+i+1}(i-1)!\,\binom{n-1}{i-1}
      \sum_{k=0}^{i-1}(-1)^{n+i+k+1}\frac{(n-l)!}{(n-1)!}\binom{n}{n+l-i+k}
      \binom{n+k-1}{k}\\
    &=(i-1)!\,\binom{n-1}{i-1}\frac{(n-l)!}{(n-1)!}
      \sum_{k=0}^{i-1}(-1)^{k}\binom{n}{n+l-i+k}\binom{n+k-1}{k}\\
    &=\frac{(n-l)!}{(n-i)!}\delta_{il}=\delta_{il}\;.
  \end{align*}
  Hence, $M$ is indeed the inverse matrix of the factorial Hankel
  matrix $H$.
\end{proof}

%%%%%%%%%%%%%%%%%%%%%%%%%%%%%%%%%%%%%%%%%%%%%%%%%%%%%%%%%%%%%%%%%%%%%%%%%%%%%%% 
\bibliographystyle{plain}
\bibliography{references}

\begin{thebibliography}{1}

\bibitem{gover}
Michael J.~C. Gover.
\newblock {\it The Explicit Inverse of Factorial Hankel Matrices}.
\newblock Department of Mathematics, University of Bradford, 1993.

\bibitem{model_class}
Karen Habermann.
\newblock {\it Small-time fluctuations for the bridge in a model class of
  hypoelliptic diffusions of weak H\"ormander type}.
\newblock {\tt arXiv:1808.04350}, 13 August 2018.

\bibitem{higham_guide}
Desmond~J. Higham and Nicholas~J. Higham.
\newblock {\em M{ATLAB} {G}uide}.
\newblock SIAM, Philadelphia, third edition, 2017.

\bibitem{higham_tool}
Nicholas~J. Higham.
\newblock The {Matrix Computation Toolbox}.
\newblock \verb|http://www.ma.man.ac.uk/~higham/mctoolbox|.

\bibitem{higham_acc}
Nicholas~J. Higham.
\newblock {\em Accuracy and {S}tability of {N}umerical {A}lgorithms}.
\newblock SIAM, Philadelphia, second edition, 2002.

\bibitem{koepf}
Wolfram Koepf.
\newblock {\em Hypergeometric {S}ummation: An {A}lgorithmic {A}pproach to
  {S}ummation and {S}pecial {F}unction {I}dentities}.
\newblock Advanced Lectures in Mathematics. Vieweg, Braunschweig, 1998.

\bibitem{trench}
William~F. Trench.
\newblock An algorithm for the inversion of finite {T}oeplitz matrices.
\newblock {\em Journal of the Society for Industrial and Applied Mathematics},
  12:515--522, 1964.

\end{thebibliography}

\end{document}